\newcommand\reallywidehat[1]{%
	\savestack{\tmpbox}{\stretchto{%
			\scaleto{%
				\scalerel*[\widthof{\ensuremath{#1}}]{\kern-.6pt\bigwedge\kern-.6pt}%
				{\rule[-\textheight/2]{1ex}{\textheight}}
			}{\textheight}%
		}{0.5ex}}%
	\stackon[1pt]{#1}{\tmpbox}%
}
\providecommand{\keywords}[1]
{
	\small	
	\textbf{\textit{Keywords---}} #1
}
\newcommand{\Addresses}{{
		\bigskip
		\footnotesize

	\noindent 	NGARTELBAYE GUERNGAR \\
		\textsc{Department of Mathematics, University of North Alabama,
		Florence, AL 35832}\\
		\textit{E-mail address}: \texttt{nguerngar@una.edu}\\
		\textit{URL}: \texttt{\url{	https://www.researchgate.net/profile/Ngartelbaye_Guerngar}}
	
		\medskip

		
	\noindent ERKAN NANE \\
		 \textsc{Department of Mathematics and Statistics, Auburn University,
		 	Auburn, AL 36849}\\
		\textit{E-mail address}: \texttt{ezn0001@auburn.edu}\\
		\textit{URL}: \texttt{\url{http://www.auburn.edu/~ezn0001}}
		
		\medskip
		
		\noindent S\"ULEYMAN ULUSOY\\
		\textsc{ Department of Mathematics and Natural Sciences, American University of Ras Al Khaimah, Ras Al Khaimah, UAE}\\
		\textit{E-mail address}: \texttt{suleyman.ulusoy@aurak.ac.ae}\\
		\textit{URL}: \texttt{\url{	https://www.researchgate.net/profile/Suleyman_Ulusoy}}

		\medskip
		\noindent HANS-WERNER VAN WYK \\
		\textsc{Department of Mathematics and Statistics, Auburn University,
			Auburn, AL 36849}\\
		\textit{E-mail address}: \texttt{hzv0008@auburn.edu}\\
		\textit{URL}: \texttt{\url{http://www.auburn.edu/~hzv008}}
		

		
}}
\newtheorem{theorem}{Theorem}[section]
\newtheorem{definition}[theorem]{Definition}
\newtheorem{lemma}[theorem]{Lemma}
\numberwithin{equation}{section}
\newcommand{\dt}{\, dt}
\def\be{\begin{equation}}
\def\ee{\end{equation}}
\def\ba{\begin{aligned}}
\def\ea{\end{aligned}}
\def\bes{\begin{equation*}}
\def\ees{\end{equation*}}
\def\bc{\begin{cases}}
\def\ec{\end{cases}}
\numberwithin{equation}{section}
\author{Ngartelbaye Guerngar\\
	University of North Alabama\\
	\and Erkan Nane\\ Auburn University\\ \and Suleyman Ulusoy\footnote{The research of S.U. has been partially supported by BAGEP 2015 award.}\\American University of Ras Al Khaimah\\ \and Hans Werner Van Wyk\\ Auburn University
}
\title{{ A uniqueness determination of the fractional exponents in a three-parameter fractional diffusion}	
	\date{}
}
\begin{document}
\maketitle





\begin{abstract}
\noindent In this article, we consider the space-time Fractional (nonlocal) diffusion equation
$$\partial_t^\beta u(t,x)={\mathtt{L}_D^{\alpha_1,\alpha_2}} u(t,x), \ \ t\geq 0, \ x\in D, $$
where $\partial_t^\beta$ is the Caputo fractional derivative of order $\beta \in (0,1)$ and  the differential operator ${\mathtt{L}_D^{\alpha_1,\alpha_2}}$ is the generator of a L\'evy process, sum of two symmetric independent $\alpha_1-$stable and $\alpha_2-$stable processes and ${D}$ is the open unit interval in $\mathbb{R}$. We consider
a nonlocal inverse problem and show that the fractional exponents $\beta$ and $\alpha_i, \ i=1,2$ are determined uniquely by the data $u(t, 0) = g(t),\
0 < t < T.$ The uniqueness result is a theoretical background for determining
experimentally the order of many anomalous diffusion phenomena, which are important in many fields, including physics and environmental engineering. We also discuss the numerical approximation of the inverse problem as a nonlinear least-squares problem and explore parameter sensitivity through numerical experiments. 

\end{abstract}

\keywords{Caputo derivative; Fractional Laplacian; Weak solution; Inverse problem;\\  Mittag-Leffler function; $\alpha-$stable process.}

\newpage
\section{Introduction}
\indent While the traditional diffusion equation $\partial_t u= \Delta u$ describes a cloud of spreading particles at the macroscopic level, the  space-time fractional diffusion equation $\partial_t^\beta u= -(-\Delta)^{\alpha/2}u$ with $0<\beta<1$ and $0<\alpha<2$ models anomalous diffusions. The fractional derivative in time can be used to describe particle sticking and trapping phenomena. The fractional space derivative models long particle jumps. The combined effect produces a concentration profile with a sharper peak, and heavier tails \cite{CMN,MBSB}. Here the fractional Laplacian $(-\Delta)^{\alpha/2}$ is the infinitesimal
generator of a symmetric $\alpha-$ stable process $X= \Big\{  X_t, \ t\geq 0, \mathbb{P}_x, \ x\in \mathbb{R}^d \Big\}$,  a typical  example of a non-local operator {\color{red}\cite{Sato}}. This process is  a L\'evy process satisfying

$$
\mathbb{E}\Big[ e^{i\xi(X_t-X_0)}\Big]= e^{-t|\xi|^\alpha} \ \ \ \text{for every} \ x, \xi\in \mathbb{R}^d.
$$
{ Here and in what follows, $\mathbb{E}[X]$ is the mathematical expectation of the random variable $X$.}
 In this paper, we consider the equation
\begin{equation}\label{DrPb}
\partial_t^\beta u= -(-\Delta)^{\alpha_1/2}u- (-\Delta)^{\alpha_2/2}u \ \ \text{ with} \ \  0<\beta<1 \ \ \text{ and} \ \  0<\alpha_1\leq \alpha_2<2.
\end{equation}
 Suppose $X$ is a symmetric $\alpha_1-$ stable process and $Y$ is a symmetric $\alpha_2-$stable process, both defined on $\mathbb{R}^d$, and that $X$ and $Y$ are independent. We define the process $Z=X+Y$. Then the infinitesimal generator of $Z$ is  $(-\Delta)^{\alpha_1/2}+ (-\Delta)^{\alpha_2/2}$.
The L\'evy process $Z$ runs on two different scales: on the small spatial scale, the $\alpha_2$ component dominates, while on the large spatial scale the $\alpha_1$ component takes over. Both components play essential roles, and so in general this process can not be regarded as a perturbation of the $\alpha_1-$stable process or of the $\alpha_2-$ stable process. Note that this process can not be obtained from symmetric stable processes through a combination of Girsanov transform and Feynman-Kac transform \cite{ChKmSg}.

\noindent The fractional-time  derivative considered here is the Caputo fractional derivative of order $0<\beta <1$ and is defined as

\begin{equation}\label{CapDer}
\partial_t^\beta q(t)= \frac{\partial^\beta q(t)}{\partial t^\beta}:= \frac{1}{\Gamma(1-\beta)}\int_0^t \frac{\partial q(s)}{\partial s}\frac{ds}{(t-s)^\beta},
\end{equation}
where $\Gamma(\cdot)$ is the Euler's gamma function. For example, $\partial_t^\beta (t^p)= \frac{t^{\beta-p}\Gamma(p+1)}{\Gamma(p+1-\beta)}$ for any $p>0$. This definition of the Caputo fractional derivative is intended to properly handle initial values \cite{Cap,CMN,10}, since its Laplace transform $s^\beta \tilde{q}(s)-s^{\beta -1} q(0)$ incorporates the initial value in the same way the first derivative does. Here, $\tilde{q}(s)=\int_0^\infty e^{-ts} q(t)dt$ represents the usual Laplace transform of the function $q$. \\
It is also well known that, if $q\in C^1(\mathbb{R}^+)$ satisfies $|q'(t)|\leq C t^{\nu -1}$ for some $\nu>0$, then by \eqref{CapDer}, the Caputo derivative of $q$ exists for all $t>0$ and the derivative is continuous in $t>0$ \cite{Kil, 11}.

The following class of functions will play an important role in this article.
\begin{definition}\label{MtgLfl}
	The Generalized (two-parameter) Mittag-Leffler function is defined by:
	\begin{equation}\label{mtglfr}
	E_{\beta,\alpha}(z)=\sum_{k=0}^{\infty}\frac{z^k}{\Gamma(\beta k+\alpha)}, \ \ z\in\mathbb{C}, \ \ \ \Re (\alpha)>0, \ \ \ \Re (\beta)>0,
	\end{equation}
	where  $\Re(\cdot)$ is the real part of a complex number.  When $\alpha=1$, this function reduces to $E_{\beta}(\cdot):=E_{\beta,1}(\cdot).$
\end{definition}

\noindent It is well-known that the Caputo derivative has a continuous spectrum \cite{CMN,11}, with eigenfunctions given in terms of the Mittag-Leffler function. In fact, it is not hard to check that the function $q(t)=E_\beta(-\lambda t^\beta)$ is a solution of the eigenvalue equation
$$
\partial_t^\beta q(t)=-\lambda q(t) \ \ \text{for any} \ \lambda>0.
$$

 For $0<\alpha_1\leq\alpha_2<2,$   $\Big[-(-\Delta)^{\alpha_1/2}- (-\Delta)^{\alpha_2/2}\Big]h$ is defined for
$$
h\in \text{Dom}\Big(-(-\Delta)^{\alpha_1/2}- (-\Delta)^{\alpha_2/2}\Big):= \Big\{ h\in L^2(\mathbb{R}^d; dx): \int_{\mathbb{R}^d} \Big(|\xi|^{\alpha_1}+|\xi|^{\alpha_2} \Big)|\hat{h}(\xi)|^2 d\xi<\infty \Big\}
$$
as the function with Fourier transform

$$
\mathcal{F}\Big[\Big(-(-\Delta)^{\alpha_1/2}- (-\Delta)^{\alpha_2/2}\Big)h(\xi)\Big]= -\Big(|\xi|^{\alpha_1}+|\xi|^{\alpha_2} \Big)|\hat{h}(\xi)|^2.
$$
Here, $\mathcal{F}(h)= \hat{h}$ represents the usual Fourier transform of the function $h$.

The main purpose of this article is to establish the determination of the unique
exponents $\beta$ and $\alpha_i, \ i=1,2$  in the {time and space fractional} derivatives by means of the observed data (also called additional condition) $u(t,0)= g(t), \ \ 0<t<T$, where $g(t)\not\equiv 0.$ 

Many works have been done recently in inverse problems \cite{CNYY, JR, Mis2, Mis1, Mis5, Mis6, TrPeSy, TarSoy,  Mis3, Mis4, ZX}. While most of these works have been dedicated to fractional derivatives only in the time variable \cite{CNYY, JR, Mis2, Mis1, Mis5, Mis6,  Mis3, Mis4, ZX}, space-time fractional derivatives  were considered in \cite{TrPeSy, TarSoy}, similarly as in this article. However, a substantial difference is that our work considers diffusion equation involving two independent processes.

The rest of this  article is organized as follows: in the next section we provide a review of
main properties of the direct problem and introduce the inverse problem. Section
3 is devoted to both the statement and the proof of the main result of this paper. Section 4 concerns numerical approximations to our problem. A conclusion in section 5 ends this paper. Throughout this article, the letter $c$, in upper or lower case, with or without a subscript, denotes a constant whose value is not of interest in this article and may stay the same or change from line to line. For simplicity, we will fix $d=1$ in the remainder of this paper. The following notation will be used in the sequel:  $D=(-1,1)$; for $a,b\in \mathbb{R},$ $a\wedge b:= \min(a,b)$; for any two positive functions $p$ and $q$, $p\asymp q$ means that there is a positive constant $c\geq 1$ so that $c^{-1}q\leq p\leq c q$ on their common domain of definition. For a given set $A\subset \mathbb{R},$ $A^C=\mathbb{R}-A.$

\section{Analysis of the direct problem and formulation of the inverse problem}
We start by considering the direct problem.  The equation we are interested in  reads as
\begin{equation}\label{eq1}
\begin{cases}
  &\partial_t^\beta u(t, x) = -(-\Delta)^{\alpha_1/2}u(t,x) - (-\Delta)^{\alpha_2/2}u(t,x), ~t\geq 0, x \in D,\\
  & u(t, x) =  0,
  \ x\in D^C, ~0 < t < T,\\
  & u(0, x) = f(x), ~x\in D.
\end{cases}
\end{equation}
Here $T > 0$ is a final time and $f$ is a given function. 

\noindent We define the operator ${\mathtt{L}_D^{\alpha_1,\alpha_2}}:= -( -\Delta)^{\alpha_1/2} -(-\Delta)^{\alpha_2/2}$  for $0<\alpha_1\leq \alpha_2<2$ on ${ D}$.
\begin{definition}[\cite{CMN}]
A function $u(t,x)$ is said to be a weak solution of \eqref{eq1} if the following conditions are satisfied:
\begin{equation}
\begin{split}
&u(t,\cdot) \in \mathscr{F} \ \ \text{for each} \ t>0,\\
&\lim\limits_{t\downarrow 0} u(t,x)= f(x) \ \ a.e, \\
& \partial_t^\beta u(t,x)= {\mathtt{L}_D^{\alpha_1,\alpha_2}} u(t,x) \ \ \text{in the distributional sense, i.e }
\end{split}
\end{equation}
$$
\int_{\mathbb{R}}\Bigg(\int_0^\infty u(t,x)\partial_t^\beta \psi(t) \Bigg)\phi(x)dx= \int_0^\infty \varepsilon^D(u(t,.),\phi)\psi(t) dt
$$
for every $ \psi\in C_0^1(\mathbb{R}^+) \ \text{and} \ \phi\in C_0^2(D)$.
\noindent Here, $\mathscr{F}$ is the $\sqrt{\epsilon_1}-$ completion of the space $C_0^{\infty}(D)$ of smooth functions with compact support in $D$, where $$\epsilon_1(u,u)= \epsilon(u,u)+\int_{\mathbb{R}} u^2(x)dx,$$ $$\epsilon(u,v)= \varepsilon^D(u,v) \ \ \text{for} \ u,v\in \mathscr{F},$$
and
$$
\varepsilon^D(u,v)= \frac{1}{2}\int_{D^2} \Big(u(x)-u(y)\Big)\Big(v(x)-v(y)\Big) \Bigg(\frac{\mathcal{A}( -{\alpha_2})}{|x-y|^{1+\alpha_2}} + \frac{b}{|x-y|^{1+\alpha_1}}\Bigg)\ dxdy,
$$
where $\mathcal{A( -\alpha)}= \alpha2^{\alpha-1}\pi^{-1/2} \Gamma\big((1+\alpha)/2\big)\Gamma(1-\alpha/2)^{-1} $ and $b\in \mathbb{R}$, for $u,v\in \mathscr{F}$  \cite{ChKmSgVk}.
\end{definition}
\noindent $\varepsilon^D(u,v)$ comes from variational formulation and symmetry,  and
$$
\mathscr{F}:=\Bigg\{u\in L^2(D; dx): \int_{D^2}\Big(u(x)-u(y)\Big)^2 \Bigg(\frac{\mathcal{A}( -{\alpha_2})}{|x-y|^{1+\alpha_2}} + \frac{b}{|x-y|^{1+\alpha_1}}\Bigg)\ dxdy<\infty \Bigg\}.
$$

Following \cite{CMN}, a weak solution of Problem \eqref{eq1} is given by the following formula
\begin{equation}\label{PwExp}
\begin{split}
u(t,x)= &\int_0^\infty \mathbb{E}_x\Big[f(Z_s); s<\tau_D \Big]f_t(s)ds\\
=& \int_0^\infty\Bigg( \sum_{n=1}^\infty e^{-s\mu_n}\langle f, \varphi_n\rangle \varphi_n(x)\Bigg) f_t(s)ds\\
=& \sum_{n=1}^{\infty} E_\beta(-\mu_nt^\beta)\langle f, \varphi_n\rangle \varphi_n(x),
\end{split}
\end{equation}
where $f_t(.)$ is defined in \cite[(2.1)]{CMN}, $\tau_D$ is defined later in \eqref{tau}, $(\mu_n)_{n\geq 1} $ is a sequence of positive numbers satisfying $0<\mu_1\leq \mu_2\leq \cdots$ and $(\varphi_n)_{n\geq 1}$ is an orthonormal basis of $L^2(D)$,  satisfying the following system of equations

\begin{equation}\label{Eigpair}
\begin{cases}
&{\mathtt{L}_D^{\alpha_1,\alpha_2}}\varphi_n= -\mu_n\varphi_n  \ \ \text{on} \ \ D\\
& \varphi_n= 0 \ \ \text{on} \ \ D^C.

\end{cases}
\end{equation}

Hence, any function $f\in L^2(D;dx)$ has the representation
\begin{equation}\label{fRep}
f(x)=\sum_{n=1}^\infty \langle f, \varphi_n \rangle \varphi_n(x).
\end{equation}
Using the spectral representation, one has
\begin{equation}\label{DomLD}
\text{Dom}\big({\mathtt{L}_D^{\alpha_1,\alpha_2}}\big)= \Big\{f\in L^2(D): {\big\|{\mathtt{L}_D^{\alpha_1,\alpha_2}}f\big\|}_{L^2(D)}^2 = \sum_{n=1}^\infty \mu_n^2\langle f, \varphi_n\rangle^2<\infty\Big\}
\end{equation}
and
$$
{\mathtt{L}_D^{\alpha_1,\alpha_2}}f(x)= -\sum_{n=1}^\infty \mu_n \langle f, \varphi_n \rangle \varphi_n(x).
$$
For any real-valued function $\phi: \mathbb{R}\rightarrow \mathbb{R},$ one can also define the operator $\phi({\mathtt{L}_D^{\alpha_1,\alpha_2}})$ as follows:
\begin{equation}\label{DomPhiLD}
\text{Dom}\Big(\phi\big({\mathtt{L}_D^{\alpha_1,\alpha_2}}\big)\Big)= \Big\{f\in L^2(D): {\Big\|\phi\big({\mathtt{L}_D^{\alpha_1,\alpha_2}}\big)f\Big\|}_{L^2(D)}^2 = \sum_{n=1}^\infty \phi(\mu_n)^2\langle f, \varphi_n\rangle^2<\infty\Big\}
\end{equation}
 and

 \begin{equation}\label{PhiLd}
 \phi\big({\mathtt{L}_D^{\alpha_1,\alpha_2}}\big)f= \sum_{n=1}^\infty \phi(\mu_n) \langle f, \varphi_n \rangle \varphi_n.
 \end{equation}

 \noindent For the remainder of this article, we will use $\phi(t)= t^k$ for some $k>0.$
 For technical reasons (cf. proof of main Theorem), we also restrict $f$ to the class of functions satisfying
 \begin{equation}\label{Condf}
 \langle f, \varphi_n\rangle > 0, \ \ n\geq 1 \ \ \Big( \text{or} \ \  \langle f, \varphi_n\rangle < 0, \ n\geq 1\Big).
 \end{equation}
 The following lemma indicates an important property of the Mittag-Leffler function. It will be used frequently in the sequel.
 \begin{lemma}{\color{red}\cite{11}}
 For each $0<\alpha<2$ and $\pi\alpha/2<\mu<\min(\pi, \pi\alpha)$, there exists a constant $C_0>0$ such that
 \begin{equation}\label{Mitg}
 \big|E_\beta(z)\big|\leq \frac{C_0}{1+|z|}, \ \ \mu\leq |\arg(z)|\leq \pi.
 \end{equation}
 \end{lemma}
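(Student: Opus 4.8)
The plan is to deduce \eqref{Mitg} from the classical Hankel-type integral representation of the Mittag-Leffler function together with its asymptotic behaviour at infinity in a sector; this is the route carried out in detail in \cite{11}, and I sketch it here. Write $\beta$ for the order of the function and fix $\mu$ in the admissible window $\pi\beta/2<\mu<\min(\pi,\pi\beta)$ (the range relevant to this paper is $\beta\in(0,1)$, in which case the window is $(\pi\beta/2,\pi\beta)$). The starting point is the representation
\[
E_\beta(z)=\frac{1}{2\pi i}\int_{\mathcal{H}_\gamma}\Phi_\beta(\zeta)\,\frac{d\zeta}{\zeta-z},\qquad
\Phi_\beta(\zeta)=\frac{1}{\beta}\,\zeta^{(1-\beta)/\beta}\,e^{\zeta^{1/\beta}},
\]
where $\mathcal{H}_\gamma$ enters from $-\infty$ along $\arg\zeta=-\gamma$, loops once around the origin, and leaves to $-\infty$ along $\arg\zeta=\gamma$, and the half-angle $\gamma$ is chosen with $\pi\beta/2<\gamma<\mu$ (principal branch of $\zeta^{1/\beta}$). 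That such a $\gamma$ exists is precisely the force of the hypothesis $\mu>\pi\beta/2$; for this choice the rays of $\zeta^{1/\beta}$ make an angle $\gamma/\beta>\pi/2$ with the positive axis, so $e^{\zeta^{1/\beta}}$ decays exponentially along them and $\Phi_\beta$ is integrable over $\mathcal{H}_\gamma$.

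First I would treat the far field: fix a large $R$ and take $z$ with $|z|\ge R$ and $\mu\le|\arg z|\le\pi$. Because $\gamma<\mu\le|\arg z|$, the point $z$ does not lie in the region enclosed by $\mathcal{H}_\gamma$, so the representation above holds as written, with no residue correction --- in particular with no exponentially growing term $\tfrac{1}{\beta}z^{(1-\beta)/\beta}e^{z^{1/\beta}}$, which is exactly the residue at $\zeta=z$ that would enter for $|\arg z|<\gamma$. Moreover $\operatorname{dist}(z,\mathcal{H}_\gamma)\ge c\,|z|$ for a constant $c=c(\gamma,\mu)>0$. Substituting the finite expansion $\tfrac{1}{\zeta-z}=-\sum_{k=1}^{N}\zeta^{\,k-1}z^{-k}+\zeta^{\,N}z^{-N}(\zeta-z)^{-1}$ and integrating term by term, the finite sum reproduces, via Hankel's integral for $1/\Gamma$, the algebraic tail $-\sum_{k=1}^{N}z^{-k}/\Gamma(1-\beta k)$, while the remainder is $O(|z|^{-N-1})$ \emph{uniformly} for $\arg z\in[\mu,\pi]$, since $|z|^{-N}$ factors out of an integral bounded by $C_1/\operatorname{dist}(z,\mathcal{H}_\gamma)\le C_1/(c\,|z|)$. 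Taking $N=1$ gives $|E_\beta(z)|\le c'\,|z|^{-1}$, hence $|E_\beta(z)|\le c''/(1+|z|)$, on $\{|z|\ge R\}\cap\{\mu\le|\arg z|\le\pi\}$.

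It then remains to handle $|z|\le R$: being entire, $E_\beta$ is continuous and hence bounded by some $M$ on the compact disk $\{|z|\le R\}$, where also $1+|z|\le 1+R$, so $|E_\beta(z)|\le M(1+R)/(1+|z|)$ there. Combining the two regimes and enlarging the constant yields $|E_\beta(z)|\le C_0/(1+|z|)$ throughout $\{\mu\le|\arg z|\le\pi\}$, which is \eqref{Mitg}. The step I expect to be the main obstacle is the \emph{uniformity} of the remainder estimate over the entire closed sector, rather than along a single ray $\arg z=\text{const}$: one must choose one contour $\mathcal{H}_\gamma$ serving all admissible $z$ simultaneously, keeping it a uniformly positive distance from the sector while its rays stay in the region where $e^{\zeta^{1/\beta}}$ is exponentially small. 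The hypothesis $\pi\beta/2<\mu<\min(\pi,\pi\beta)$ is precisely what makes such a choice of $\gamma$ possible; granting it, everything else is routine estimation of an absolutely convergent integral, and one may alternatively simply cite \cite{11} (or Podlubny's monograph) for the full bookkeeping.
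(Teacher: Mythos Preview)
The paper does not actually prove this lemma; it is stated with a bare citation to \cite{11} (Podlubny's monograph) and used as a black box throughout. Your sketch via the Hankel-type integral representation and the uniform asymptotic expansion in the sector is precisely the argument carried out in that reference, so your proposal is both correct and faithful to the source the paper invokes.
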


\begin{theorem}
The eigenvalues of the spectral problem for the one-dimensional
double fractional Laplace operator, i.e $(-\Delta)^{\alpha_1}u(x) + (-\Delta)^{\alpha_2}u(x)= \mu_n u(x)$ in the interval $D\subset \mathbb{R}$ satisfy the following bounds

\begin{equation}\label{AstEigv}
c_1(n^{\alpha_1}+n^{\alpha_2})	\leq \mu_n\leq c_2  (n^{\alpha_1}+n^{\alpha_2}), \ \ \text{for all} \ n\geq 1 \ \ \text{and} \ \ c_1, c_2>0.
\end{equation}
\end{theorem}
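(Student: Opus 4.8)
The approach I would take rests on the Courant--Fischer min-max characterization of the eigenvalues $\mu_n$ of the self-adjoint operator $-\mathtt{L}_D^{\alpha_1,\alpha_2}=(-\Delta)^{\alpha_1/2}+(-\Delta)^{\alpha_2/2}$ on $D=(-1,1)$, namely
$$
\mu_n=\min_{\substack{V\subset\mathscr F\\ \dim V=n}}\ \max_{\substack{u\in V\\ u\neq 0}}\frac{\varepsilon^D(u,u)}{\|u\|_{L^2(D)}^2},
$$
combined with the classical fact that the $n$-th eigenvalue $\lambda_n^{(\alpha)}$ of a \emph{single} $\alpha$-fractional Laplacian on the interval $D$ (with exterior Dirichlet condition) satisfies $\lambda_n^{(\alpha)}\asymp n^{\alpha}$. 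Since $\varepsilon^D$ is the (positive) sum of the two single-exponent Dirichlet forms $\varepsilon^{D,\alpha_1}$ and $\varepsilon^{D,\alpha_2}$, everything reduces to comparing the form of the sum with the forms of its summands.

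For the lower bound I would drop one of the two non-negative terms. Because $\varepsilon^D(u,u)\ge\varepsilon^{D,\alpha_i}(u,u)$ pointwise, every $n$-dimensional competitor $V$ in the min-max obeys $\max_{u\in V}\varepsilon^D(u,u)/\|u\|^2\ge\max_{u\in V}\varepsilon^{D,\alpha_i}(u,u)/\|u\|^2$, and since $V$ ranges over a subspace of the form domain of $\varepsilon^{D,\alpha_i}$ this gives $\mu_n\ge\lambda_n^{(\alpha_i)}$ for $i=1,2$. Hence $\mu_n\ge\max\bigl(\lambda_n^{(\alpha_1)},\lambda_n^{(\alpha_2)}\bigr)\ge\tfrac12\bigl(\lambda_n^{(\alpha_1)}+\lambda_n^{(\alpha_2)}\bigr)\ge c_1\,(n^{\alpha_1}+n^{\alpha_2})$.

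For the upper bound I would exhibit an explicit $n$-dimensional trial space. Take $e_k(x)=\sin\!\bigl(k\pi(x+1)/2\bigr)$ for $k=1,\dots,n$, the first $n$ Dirichlet eigenfunctions of $-\Delta$ on $D$, each extended by $0$ to $\R$; since $e_k(\pm 1)=0$ and $e_k$ is smooth up to the boundary, each (zero-extended) $e_k$ lies in $\mathscr F$, and the $e_k$ are $L^2(D)$-orthogonal. For $u=\sum_{k\le n}c_k e_k$ one has $\varepsilon^{D,\alpha}(u,u)\le c\int_\R|\xi|^{\alpha}|\hat u(\xi)|^2\,d\xi$ (restricting a non-negative integrand to $D^2$ only decreases it), the elementary interpolation inequality $\int_\R|\xi|^{\alpha}|\hat u|^2\,d\xi\le\bigl(\int_\R|\hat u|^2\bigr)^{1-\alpha/2}\bigl(\int_\R|\xi|^2|\hat u|^2\bigr)^{\alpha/2}$ valid for $0<\alpha<2$, and the identity $\int_\R|\xi|^2|\hat u|^2\,d\xi=c\,\|u'\|_{L^2(D)}^2=c\sum_k c_k^2\,(k\pi/2)^2\|e_k\|^2\le c\,(n\pi/2)^2\|u\|_{L^2}^2$ (the distributional derivative of the zero-extension has no boundary mass since $u$ is continuous across $\partial D$). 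Combining these gives $\varepsilon^{D,\alpha_i}(u,u)\le C\,n^{\alpha_i}\|u\|_{L^2}^2$ for $i=1,2$; adding the two estimates and using $V_n=\mathrm{span}\{e_1,\dots,e_n\}$ as competitor in the min-max yields $\mu_n\le c_2\,(n^{\alpha_1}+n^{\alpha_2})$, which finishes the proof.

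The step I expect to be the main obstacle is making the two comparisons with the single-exponent operator fully rigorous: one must confirm that the zero-extended trial functions $e_k$ genuinely belong to the form domain $\mathscr F$ (they lie in $H^{s}(\R)$ for every $s<3/2$, hence in $\mathscr F$ because $\alpha_i/2<1$) and, for the lower bound, invoke the single-operator asymptotics $\lambda_n^{(\alpha)}\asymp n^{\alpha}$. If one prefers a self-contained argument, the lower bound $\lambda_n^{(\alpha)}\ge c\,n^{\alpha}$ follows from a heat-trace estimate, $\sum_n e^{-t\lambda_n^{(\alpha)}}=\int_D p_t^D(x,x)\,dx\le|D|\,p_t(0,0)=c\,|D|\,t^{-1/\alpha}$, which gives $\#\{n:\lambda_n^{(\alpha)}\le\Lambda\}\le C\,\Lambda^{1/\alpha}$ and hence $\lambda_n^{(\alpha)}\ge c\,n^{\alpha}$, while the matching upper bound $\lambda_n^{(\alpha)}\le C\,n^{\alpha}$ is exactly the single-exponent case of the trial-space computation above.
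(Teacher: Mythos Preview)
Your argument is correct. The variational comparison via Courant--Fischer works as you describe: dropping one summand gives $\mu_n\ge\lambda_n^{(\alpha_i)}$, and the trial-space computation with the zero-extended Dirichlet sine functions (which do lie in $\mathscr F$ because $\alpha_i/2<1<3/2$) yields the upper bound. The H\"older interpolation step and the heat-trace counting argument for the single-exponent lower bound are also fine.

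The paper's proof, however, proceeds entirely differently: it is a one-line citation of Theorem~4.4 in Chen--Song, \emph{Two-sided eigenvalue estimates for subordinate processes in domains}, J.\ Funct.\ Anal.\ 226 (2005). That result compares the Dirichlet eigenvalues of the generator $-\phi(-\Delta)$ of a subordinate Brownian motion killed on leaving $D$ with $\phi$ applied to the Dirichlet Laplacian eigenvalues on $D$; specializing to the Bernstein function $\phi(s)=s^{\alpha_1/2}+s^{\alpha_2/2}$ (so that $\phi(|\xi|^2)=|\xi|^{\alpha_1}+|\xi|^{\alpha_2}$) and using $\lambda_n(-\Delta_D)\asymp n^2$ in dimension one gives $\mu_n\asymp n^{\alpha_1}+n^{\alpha_2}$ immediately. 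The Chen--Song route is shorter on the page but imports substantial machinery (complete Bernstein functions, subordination, intrinsic ultracontractivity-type estimates), and it covers general Bernstein $\phi$ and general domains at once. Your approach is more elementary and fully self-contained in the present setting, and it makes transparent exactly which structural features of the form --- additivity and non-negativity of the two summands --- drive the two-sided bound; its cost is that it does not automatically extend to operators that are not sums of pure fractional powers.
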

\begin{proof}
This follows easily from \cite[Theorem 4.4]{ChSg} by taking $\phi(s)= s^{\alpha_1}+ s^{\alpha_2}. $
\end{proof}
For the existence of a solution to \eqref{eq1}, we now show that the series given in  \eqref{PwExp} is uniformly convergent for $(t,x)\in (0,T]\times D$. To this aim, we use the following Lemma giving bounds for the eigenvalues and eigenfunctions:

\begin{lemma}\label{lem:eigenfunction-bound}
	Suppose that the initial value $f$ in \eqref{eq1} is such that $ f\in \text{Dom}\Big({\big(\mathtt{L}_D^{\alpha_1,\alpha_2}\big)}^k\Big)$ for  $k>-1+\frac{3}{2\alpha_2}.$ Let  $(\mu_n, \varphi_n)$ be the eigenpair from \eqref{Eigpair},  then
\begin{equation}\label{EigBd}
\begin{split}
&\big|\langle f,\varphi_n \rangle\big|\leq \sqrt{M}\mu_n^{-k}\\
& \big|\varphi_n(x) \big|\leq c_3 \Big(\mu_n^{1/2\alpha_1}\wedge \mu_n^{1/2\alpha_2}\Big),
\end{split}
\end{equation}
where $$
M:= \sum_{n=1}^\infty \mu_n^{2k}\langle f, \varphi_n\rangle^2<\infty \ \ \text{and} \ \ c_3>0.
$$

\end{lemma}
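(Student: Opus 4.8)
The plan is to prove the two bounds in \eqref{EigBd} separately, the first being a quick consequence of the spectral characterization of $\text{Dom}\big((\mathtt{L}_D^{\alpha_1,\alpha_2})^k\big)$ and the second following from the eigenvalue asymptotics \eqref{AstEigv} together with a known $L^2$-to-$L^\infty$ estimate for eigenfunctions of such nonlocal operators (e.g.\ an intrinsic ultracontractivity or heat-kernel bound, presumably from \cite{ChKmSgVk} or \cite{ChSg}). First I would observe that, by hypothesis, $f\in\text{Dom}\big((\mathtt{L}_D^{\alpha_1,\alpha_2})^k\big)$ means, via \eqref{DomPhiLD} with $\phi(t)=t^k$, that $M:=\sum_{n\ge1}\mu_n^{2k}\langle f,\varphi_n\rangle^2<\infty$. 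Since every term is nonnegative, for each fixed $n$ we get $\mu_n^{2k}\langle f,\varphi_n\rangle^2\le M$, hence $|\langle f,\varphi_n\rangle|\le\sqrt{M}\,\mu_n^{-k}$, which is the first inequality. The finiteness of $M$ is exactly the membership condition, so nothing more is needed here.

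For the eigenfunction bound, the idea is to use the transition density $p^D_t(x,y)$ of the killed process $Z$ on $D$. Because $\varphi_n$ is an eigenfunction with eigenvalue $\mu_n$, one has the reproducing identity $e^{-t\mu_n}\varphi_n(x)=\int_D p^D_t(x,y)\varphi_n(y)\,dy$, so by Cauchy--Schwarz and $\|\varphi_n\|_{L^2(D)}=1$,
\[
e^{-t\mu_n}|\varphi_n(x)|\le\Big(\int_D p^D_t(x,y)^2\,dy\Big)^{1/2}\le \|p^D_t\|_{L^\infty(D\times D)}^{1/2}\,|D|^{1/2}.
\]
The free transition density of $Z=X+Y$ satisfies $p_t(x,y)\le p^{(\alpha_1)}_t(x,y)+p^{(\alpha_2)}_t(x,y)$-type controls, giving an on-diagonal bound $p_t(x,x)\le c\,(t^{-1/\alpha_1}\wedge t^{-1/\alpha_2})$ for the relevant range of $t$ (small $t$ governed by $\alpha_2$, large $t$ by $\alpha_1$), and $p^D_t\le p_t$ by domination. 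Substituting and then optimizing over $t>0$—the natural choice is $t\asymp 1/\mu_n$—yields $|\varphi_n(x)|\le c\,e^{t\mu_n}(t^{-1/2\alpha_1}\wedge t^{-1/2\alpha_2})^{1/2}\big|_{t\asymp1/\mu_n}\le c_3\big(\mu_n^{1/2\alpha_1}\wedge\mu_n^{1/2\alpha_2}\big)$, where I used \eqref{AstEigv} only implicitly to know $\mu_n\to\infty$ so that the relevant value of $t$ is small and the $\alpha_2$-scaling is active for large $n$ (and the min is what survives uniformly in $n$). This is the second inequality.

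The main obstacle is getting the sharp on-diagonal heat-kernel estimate with the correct $\alpha_1$-versus-$\alpha_2$ dichotomy for the \emph{sum} process $Z$ on the bounded domain $D$, rather than a cruder single-exponent bound; the point stressed in the introduction—that $Z$ is genuinely two-scale and not a perturbation of either stable component—means one must invoke a result tailored to such mixtures (the cited \cite{ChSg}, \cite{ChKmSgVk}) rather than a textbook stable-process estimate. A secondary technical point is ensuring the domination $p^D_t\le p_t$ and the reproducing identity hold in the weak/Dirichlet-form sense used here, and that the constant $c_3$ can be taken uniform in $n$; this is where one would cite the spectral theory and heat-kernel bounds for the Dirichlet form $\varepsilon^D$ developed in \cite{ChKmSgVk}. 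With those inputs the optimization over $t$ is routine.
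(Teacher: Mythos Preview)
Your proposal is correct and follows essentially the same route as the paper: the first bound is immediate from the spectral definition of $M$, and the second comes from the heat-kernel domination $p_D\le p$, the on-diagonal estimate $p(t,x,x)\le C\big(t^{-1/\alpha_1}\wedge t^{-1/\alpha_2}\big)$ (which the paper takes from \cite{ChKmSg}, equation \eqref{AsympHK}), and the choice $t=\mu_n^{-1}$. The only cosmetic difference is that the paper bypasses your Cauchy--Schwarz step by using the Mercer expansion $p_D(t,x,x)=\sum_m e^{-\mu_m t}\varphi_m(x)^2$ on the diagonal to read off $e^{-\mu_n t}|\varphi_n(x)|^2\le p_D(t,x,x)$ directly; this is slightly cleaner and makes it clear that \eqref{AstEigv} is not actually needed for the eigenfunction bound.
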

\begin{proof}
The first bound in \eqref{EigBd} follows directly from the definition of $M$. So we only show the second bound.\\
Recall that the fundamental solution $p(t,x,y)$, also referred to as the heat kernel of ${\color{red}\mathtt{L}^{\alpha_1,\alpha_2}}$,  is the unique solution to
\begin{equation}
\partial_t u= {\mathtt{L}^{\alpha_1,\alpha_2}} u.
\end{equation}
It represents  the transition density function of $Z$. Denote the first exit time of the process $Z$ by
\begin{equation}\label{tau}
\tau_D:=\inf\{ t\geq 0: Z_t\notin D\}.
\end{equation}
Let $Z^D$ denote the process $Z$ "killed" upon exiting $D$, i.e

\begin{equation}
Z^D_t:= \begin{cases}
&Z_t , \ \ \ \ \  t<\tau_D \\
&\partial,  \ \ \ \ t\geq \tau_D
\end{cases}
\end{equation}
Here, $\partial$ is a cemetery  point added to $D$. Throughout this paper, we use the convention that any real-valued function $f$ can be extended by taking $f(\partial)=0.$
Then $Z^D$ has a jointly continuous transition density function $p_D(t,x,y).$ Moreover, by the strong Markov property of $Z$, one has for $t>0$ and $x,y\in D$,
\begin{equation}\label{StMkvP}
p_D(t,x,y)= p(t,x,y)-E\big[p(t-\tau_D, X_{\tau_D},y); t<\tau_D \big]\leq p(t,x,y).
\end{equation}
By \cite[(1.4)]{ChKmSg},
\begin{equation}\label{AsympHK}
p(t,x,y)\asymp\Big( t^{-1/\alpha_1}\wedge t^{-1/\alpha_2} \Big)\wedge\Bigg( \frac{t}{|x-y|^{1+\alpha_1}}+\frac{t}{|x-y|^{1+\alpha_2}}\Bigg).
\end{equation}
In particular, one has $\sup\limits_{x\in D}\int_D p(t,x,y)^2 dy<\infty$ for all $t>0.$
Denote by $\Big\{ p_t^D, t\geq 0\Big\}$ the transition semigroup of $Z^D$, i.e

$$
p_t^Df(x)= \int_D p_D(t,x,y)f(y)dy.
$$
It is well know ( cf. \cite{FukShDa}) that $u(t,x)= p_t^Df(x)$ is the unique weak solution  to

\begin{equation*}
\partial_t u= {\mathtt{L}_D^{\alpha_1,\alpha_2}} u
\end{equation*}
with initial condition $u(0,x)=f(x)$ on the Hilbert space $L^2(D;dx)$.
Therefore,  for each $t > 0$, $p_t^D$ is a Hilbert-Schmidt operator in $L^2(D; dx)$ so it is compact \cite{CMN}. Consequently, for the eigenpair defined in \eqref{Eigpair}, we have $p_t^D \varphi_n=e^{-\mu_n t}\varphi_n$ in $L^2(D;dx)$ for $n\geq 1$ and $t>0.$ Combining this with \eqref{fRep}, it follows that

\begin{equation*}
p_t^Df(x)= \sum_{n=1}^\infty \langle f,\varphi_n\rangle p_t^D\varphi_n=  \sum_{n=1}^\infty e^{-\mu_nt}\langle f,\varphi_n\rangle \varphi_n.
\end{equation*}

In particular, the transition density $p_D(t,x,y)$ is given by

\begin{equation}\label{DchKrn}
p_D(t,x,y)=  \sum_{n=1}^\infty e^{-\mu_nt}\varphi_n(x)\varphi_n(y).
\end{equation}

Next,

\begin{equation*}
e^{-\mu_n t}\big|\varphi_n(x)\big|^2\leq \sum_{m=1}^\infty e^{-\mu_m t}\big|\varphi_m(x)\big|^2= p_D(t,x,x)\leq p(t,x,x)\leq C_1\Big( t^{-1/\alpha_1}\wedge t^{-1/\alpha_2} \Big).
\end{equation*}
Hence, taking the square root of both sides, we get

\begin{equation}
\big|\varphi_n(x)\big|\leq C_2e^{\mu_n t/2} \sqrt{t^{-1/\alpha_1}\wedge t^{-1/\alpha_2} }.
\end{equation}
Finally, taking $t=\mu_n^{-1}$ concludes the proof.\\
\end{proof}

\noindent With everything set, we can now proceed to show the uniform convergence of the series given in \eqref{PwExp}. In fact, using \eqref{Mitg}, \eqref{AstEigv} and \eqref{EigBd}, we have

\begin{equation}
\begin{split}
\sum_{n=1}^{\infty} \max\limits_{x\in D}\big|E_\beta(-\mu_nt^\beta)\langle f, \varphi_n\rangle \varphi_n(x)\big|\leq & \sqrt{M}C\sum_{n=1}^\infty \frac{1}{1+|\mu_n t^\beta|}\mu_n^{-k}\Big(\mu_n^{1/2\alpha_1}\wedge \mu_n^{1/2\alpha_2}\Big)\\
\leq & C\sum_{n=1}^\infty n^{(-k-1)\alpha_2+1/2}<\infty
\end{split}
\end{equation}
by our choice of $k$ in Lemma \ref{lem:eigenfunction-bound}. This shows that the series in \eqref{PwExp} is uniformly convergent.\\

We are now  ready   to state and prove our main result.
\section{Statement and proof of the main result}
We open this section straight with  our main result. We then provide its proof.
\begin{theorem}\label{thm:parameters_determine_path_uniquely}
{ Suppose $0<\gamma<1$ and $0<\eta_1\leq \eta_2<2$.} Let $u$ be the weak solution of \eqref{eq1} and let $v$  be the weak solution of the following problem
\begin{equation}\label{UniqP}
\begin{cases}
&\partial^\gamma_t v(t,x)= -(-\Delta)^{\eta_1/2}v(t,x)- (-\Delta)^{\eta_2/2}v(t,x), \ \ t\geq 0, x\in D,\\
& v(t, x) =  0,  \ x\in D^C, ~0 < t < T,\\
& v(0, x) = f(x), \ x\in D.
\end{cases}
\end{equation}
If $u(t,0)=v(t,0), \ 0<t<T$ and  \eqref{Condf} holds, then
$$\beta=\gamma\ \ \text{and} \ \alpha_i= \eta_i, \ i=1,2.$$
\end{theorem}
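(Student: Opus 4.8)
The strategy is to exploit the spectral representation \eqref{PwExp} together with the analyticity of the Mittag-Leffler function in the time variable, passing to the Laplace transform to isolate the spectral data. From \eqref{PwExp}, at $x=0$ the hypothesis $u(t,0)=v(t,0)$ for $0<t<T$ reads
\[
\sum_{n=1}^\infty E_\beta(-\mu_n t^\beta)\langle f,\varphi_n\rangle\varphi_n(0)
=\sum_{n=1}^\infty E_\gamma(-\nu_n t^\gamma)\langle f,\psi_n\rangle\psi_n(0),
\]
where $(\mu_n,\varphi_n)$ are the eigenpairs of $\mathtt{L}_D^{\alpha_1,\alpha_2}$ and $(\nu_n,\psi_n)$ those of $\mathtt{L}_D^{\eta_1,\eta_2}$. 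Both series are uniformly convergent (as shown above) and each side extends to a real-analytic, indeed (via the known analytic continuation of $t\mapsto E_\beta(-\mu t^\beta)$ as a function that is analytic in a sector) holomorphic function of $t$ in a neighborhood of $(0,\infty)$; hence by analytic continuation the identity holds for all $t>0$, not merely $0<t<T$. This removes the role of $T$.

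Next I would take the Laplace transform in $t$. Using the well-known formula $\int_0^\infty e^{-st}E_\beta(-\mu t^\beta)\,dt = s^{\beta-1}/(s^\beta+\mu)$, the identity becomes, for $s$ large,
\[
\sum_{n=1}^\infty \frac{s^{\beta-1}}{s^\beta+\mu_n}\,\langle f,\varphi_n\rangle\varphi_n(0)
=\sum_{n=1}^\infty \frac{s^{\gamma-1}}{s^\gamma+\nu_n}\,\langle f,\psi_n\rangle\psi_n(0).
\]
Both sides are analytic functions of $s$ on $\mathbb{C}\setminus(-\infty,0]$ (after the substitution $z=s^\beta$, resp. $z=s^\gamma$, each side is a meromorphic function whose poles sit on the negative real axis at $z=-\mu_n$, resp. $z=-\nu_n$), so one can continue in $s$ and compare singularities. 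The first step of the comparison is the asymptotics as $s\to 0^+$: the leading behavior of the left side is governed by $s^{\beta-1}/\mu_1\cdot(\sum\ldots)$-type terms versus $s^{\gamma-1}$ on the right, and matching the power of $s$ forces $\beta=\gamma$. Here the sign condition \eqref{Condf} is essential: it guarantees that $\langle f,\varphi_n\rangle\varphi_n(0)$ does not vanish and does not change sign (one checks $\varphi_n(0)\neq 0$ has a consistent sign, e.g. the ground state is positive, and more generally the relevant combination is controlled), so there is no cancellation that could destroy the leading-order term.

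With $\beta=\gamma$ established, set $z=s^\beta$ and compare the two meromorphic functions
\[
F(z)=\sum_{n=1}^\infty \frac{a_n}{z+\mu_n},\qquad G(z)=\sum_{n=1}^\infty\frac{b_n}{z+\nu_n},
\]
with $a_n=\langle f,\varphi_n\rangle\varphi_n(0)$, $b_n=\langle f,\psi_n\rangle\psi_n(0)$, which now agree on an open set and hence everywhere they are defined. Two such functions can coincide only if they have the same poles with the same residues: the set $\{-\mu_n\}$ equals $\{-\nu_n\}$ (as sets, with multiplicity detected by residues), and on matching poles $a_n=b_n$. In particular $\mu_1=\nu_1$, $\mu_2=\nu_2$, and so on. Finally I invoke the eigenvalue asymptotics \eqref{AstEigv}: $\mu_n\asymp n^{\alpha_1}+n^{\alpha_2}$ and $\nu_n\asymp n^{\eta_1}+n^{\eta_2}$; since $\mu_n=\nu_n$ for all $n$, comparing growth rates as $n\to\infty$ forces $\alpha_2=\eta_2$ (the dominant exponent) and then $\alpha_1=\eta_1$ (the subdominant one, read off from the ratio $\mu_n/n^{\alpha_2}\to$ the coefficient structure, or from a second-order expansion).

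The main obstacle I anticipate is the rigorous justification of the pole-matching step: one must ensure the Dirichlet series $F(z)$ and $G(z)$ genuinely have isolated simple poles at $-\mu_n$ with residue $a_n$ — this requires knowing the $\mu_n$ are simple (or handling multiplicities), that $a_n\neq 0$ (here \eqref{Condf} plus a sign/non-vanishing statement for $\varphi_n(0)$ is used), and that the tail of the series is uniformly convergent near each pole so that term-by-term residue extraction is valid. The uniform convergence estimate established before the theorem, together with \eqref{AstEigv} and Lemma \ref{lem:eigenfunction-bound}, should supply exactly the decay needed. A secondary subtlety is making the analytic-continuation argument in $t$ precise, i.e. verifying that $\sum_n E_\beta(-\mu_n t^\beta)a_n$ is holomorphic in a complex neighborhood of the positive real axis; this again follows from the decay bound \eqref{Mitg} for $E_\beta$ in a sector combined with \eqref{AstEigv}.
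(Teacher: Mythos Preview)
Your plan is essentially the paper's own proof: spectral expansion at $x=0$, an asymptotic argument to force $\beta=\gamma$, then Laplace transform and pole-matching (the paper does this via Cauchy's integral formula over small disks) to get $\mu_n=\nu_n$, and finally the two-sided eigenvalue bound \eqref{AstEigv} to recover $\alpha_2=\eta_2$ and then $\alpha_1=\eta_1$. The only variation is that you read off $\beta=\gamma$ from the $s\to 0^+$ behavior of the Laplace transform, whereas the paper works directly in $t$ using $E_\beta(-\mu t^\beta)=\frac{1}{\Gamma(1-\beta)\mu t^\beta}+O(t^{-2\beta})$ as $t\to\infty$ and then multiplies by $t^\gamma$; these are dual arguments, and your explicit mention of analytic continuation from $(0,T)$ to $(0,\infty)$ and of the nonvanishing of $\varphi_n(0)$ are points the paper uses (via the normalization $\varphi_n(0)=\psi_n(0)=1$) without further comment.
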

\begin{proof}
	The proof follows a similar argument as in \cite{TarSoy}.
Using the explicit formula \eqref{PwExp}, the weak solutions $u$ and $v$ can be written as
\begin{equation}\label{PwExpU}
u(t,x)= \sum_{n=1}^{\infty} E_\beta(-\mu_nt^\beta)\langle f, \varphi_n\rangle \varphi_n(x)
\end{equation}
and
\begin{equation}\label{PwExpV}
v(t,x)= \sum_{n=1}^{\infty} E_\gamma(-\lambda_nt^\gamma)\langle f, \psi_n\rangle \psi_n(x),
\end{equation}
where the eigenpairs $\Big(\mu_n, \varphi_n\Big)$ and $\Big( \lambda_n, \psi_n\Big)$ satisfy

\begin{equation*}
\begin{cases}
&{\mathtt{L}_D^{\alpha_1,\alpha_2}}\varphi_n= -\mu_n\varphi_n  \ \ \text{on} \ \ D.\\
& \varphi_n= 0 \ \ \text{on} \ \ D^C

\end{cases}
\end{equation*}

and

\begin{equation*}
\begin{cases}
&\mathtt{L}_D^{\eta_1,\eta_2}\psi_n= -\lambda_n\psi_n  \ \ \text{on} \ \ D.\\
& \psi_n= 0 \ \ \text{on} \ \ D^C,
\end{cases}
\end{equation*}
Without loss of generality, we can normalize the eigenfunctions such that $\varphi_n(0)=\psi_n(0)=1$ for all $n\geq 1.$ This implies that

\begin{equation}\label{EqWSol}
\sum_{n=1}^{\infty} E_\beta(-\mu_nt^\beta)\langle f, \varphi_n\rangle= \sum_{n=1}^{\infty} E_\gamma(-\lambda_nt^\gamma)\langle f, \psi_n\rangle
\end{equation}
if we assume that $u(t,0)= v(t,0)$.\\
Next, we use the following asymptotic property of the Mittag-Leffler function \cite{Kil, 11}
\begin{equation}\label{MitgAspt}
E_l(-t)= \frac{1}{t\Gamma(1-l)}+O(|t|^{-2}), \  \ 0<l<1.
\end{equation}
Combining \eqref{AstEigv} and \eqref{MitgAspt}, we get

\begin{equation}\label{MitgAsymp}
\Bigg|E_\beta(-\mu_n t^\beta)- \frac{1}{\Gamma(1-\beta)}\frac{1}{\mu_n t^\beta}\Bigg|\leq Ct^{-2\beta}.
\end{equation}
By adding and subtracting the term $\frac{1}{\Gamma(1-\beta)}\frac{1}{\mu_n t^\beta}$ in the left side term in \eqref{EqWSol}, we get the following asymptotic equation

\begin{equation}\label{AsyEqWSolU}
\begin{split}
\sum_{n=1}^{\infty} E_\beta(-\mu_nt^\beta)\langle f, \varphi_n\rangle
&= \sum_{n=1}^{\infty}\langle f, \varphi_n\rangle\Bigg[\frac{1}{\Gamma(1-\beta)}\frac{1}{\mu_n t^\beta}+E_\beta(-\mu_n t^\beta)- \frac{1}{\Gamma(1-\beta)}\frac{1}{\mu_n t^\beta} \Bigg]\\
&= \sum_{n=1}^{\infty}\langle f, \varphi_n\rangle\frac{1}{\Gamma(1-\beta)}\frac{1}{\mu_n t^\beta} +O(|t|^{-2\beta}).
\end{split}
\end{equation}
Similarly,
\begin{equation}\label{AsyEqWSolV}
\begin{split}
\sum_{n=1}^{\infty} E_\gamma(-\lambda_nt^\gamma)\langle f, \psi_n\rangle
&= \sum_{n=1}^{\infty}\langle f, \psi_n\rangle\Bigg[\frac{1}{\Gamma(1-\gamma)}\frac{1}{\lambda_n t^\gamma}+E_\gamma(-\lambda_n t^\gamma)- \frac{1}{\Gamma(1-\gamma)}\frac{1}{\lambda_n t^\gamma} \Bigg]\\
&= \sum_{n=1}^{\infty}\langle f, \psi_n\rangle\frac{1}{\Gamma(1-\gamma)}\frac{1}{\lambda_n t^\gamma} +O(|t|^{-2\gamma}).
\end{split}
\end{equation}
Now combining \eqref{EqWSol}, \eqref{AsyEqWSolU} and \eqref{AsyEqWSolV}, we get, as $t\rightarrow\infty$

\begin{equation}\label{AsymEqWSol}
\sum_{n=1}^{\infty}\langle f, \varphi_n\rangle\frac{1}{\Gamma(1-\beta)}\frac{1}{\mu_n t^\beta} +O(|t|^{-2\beta})= \sum_{n=1}^{\infty}\langle f, \psi_n\rangle\frac{1}{\Gamma(1-\gamma)}\frac{1}{\lambda_n t^\gamma} +O(|t|^{-2\gamma}).
\end{equation}
Now assume, for example, that $\beta>\gamma$. Then multiply \eqref{AsymEqWSol} by $t^{\gamma}$ to get
\begin{equation}\label{FinEqWsol}
-t^{\gamma-\beta}\sum_{n=1}^{\infty}\langle f, \varphi_n\rangle\frac{1}{\Gamma(1-\beta)}\frac{1}{\mu_n } +O(|t|^{\gamma-2\beta})+ \sum_{n=1}^{\infty}\langle f, \psi_n\rangle\frac{1}{\Gamma(1-\gamma)}\frac{1}{\lambda_n } +O(|t|^{-\gamma})=0.
\end{equation}
Letting $t\rightarrow\infty$ in \eqref{FinEqWsol} yields

\begin{equation}\label{Contrd}
\sum_{n=1}^{\infty}\langle f, \psi_n\rangle\frac{1}{\Gamma(1-\gamma)}\frac{1}{\lambda_n }=0: \ \ \text{a contradiction to \eqref{Condf}!}
\end{equation}
Similarly, assuming $\gamma>\beta$ also leads to a contradiction. Thus $\beta=\gamma.$

We now prove the second part of the Theorem, i.e $\alpha_i=\eta_i, \ i=1,2.$ To this aim, we will show that $\mu_n=\lambda_n$ for all $n\geq 1.$

Since $\beta=\gamma$, Equation \eqref{EqWSol} becomes

\begin{equation}\label{EqWSolStm}
\sum_{n=1}^{\infty} E_\beta(-\mu_nt^\beta)\langle f, \varphi_n\rangle= \sum_{n=1}^{\infty} E_\beta(-\lambda_nt^\beta)\langle f, \psi_n\rangle.
\end{equation}
Taking the Laplace transform of $E_\beta(-\mu_nt^\beta)$ yields

\begin{equation}\label{LapTrf}
\int_0^\infty e^{-zt}E_\beta(-\mu_n t^\beta) dt= \frac{z^{\beta-1}}{z^\beta+\mu_n}, \ \ \Re z>0.
\end{equation}
Furthermore, taking the Laplace transform of the Mittag-Leffler function term by term,  we get

\begin{equation}\label{MitgLefTrmbT}
\int_0^\infty e^{-zt}E_\beta(-\mu_n t^\beta) dt= \frac{z^{\beta-1}}{z^\beta+\mu_n}, \ \ \Re z>\mu_n^{1/\beta}.
\end{equation}
It follows that $\sup\limits_{t\geq 0}\big|E_\beta(-\mu_nt^\beta)\big|<\infty$ by \eqref{Mitg}. This implies that $\int_0^\infty e^{-zt}E_\beta(-\mu_n t^\beta) dt$ is analytic in the domain $\Re z>\mu_n^{1/\beta}.$ Then by analytic continuity, $\int_0^\infty e^{-zt}E_\beta(-\mu_n t^\beta) dt$ is analytic in the domain $\Re z>0.$\\
Using \eqref{Mitg}, \eqref{AstEigv}, \eqref{EigBd} and Lebesgue's convergence Theorem, we get that $$e^{-t\Re z} t^\beta \ \ \text{is integrable for } \ \ t\in(0,\infty) \ \ \text{with fixed} \ z \ \ \text{such that} \ \Re z>0$$ and
\begin{align*}
\Big|e^{-t\Re z} \sum_{n=1}^{\infty} E_\beta(-\mu_nt^\beta)\langle f, \varphi_n\rangle\Big|\leq& C_0 e^{-t\Re z}\Bigg(\sum_{n=1}^\infty\langle f, \varphi_n\rangle \frac{1}{\mu_nt^{\beta}} \Bigg)\\
\leq & C_0^{'} e^{-t\Re z} t^{-\beta}\sum_{n=1}^{\infty} n^{-\alpha_2(k+1)}<\infty
\end{align*}
by the choice of $k$ in \eqref{EigBd}.\\
Next, for $\Re z>0$, we have

\begin{equation}\label{LpTrEMu}
\int_0^\infty e^{-t\Re z} \sum_{n=1}^{\infty} E_\beta(-\mu_nt^\beta)\langle f, \varphi_n\rangle dt= \sum_{n=1}^{\infty}\langle f, \varphi_n\rangle \frac{z^{\beta-1}}{z^\beta+\mu_n}.
\end{equation}
Similarly,

\begin{equation}\label{LpTrELbd}
\int_0^\infty e^{-t\Re z} \sum_{n=1}^{\infty} E_\beta(-\lambda_nt^\beta)\langle f, \psi_n\rangle dt= \sum_{n=1}^{\infty}\langle f, \psi_n\rangle \frac{z^{\beta-1}}{z^\beta+\lambda_n}.
\end{equation}
This means, by \eqref{EqWSolStm}, \eqref{LpTrEMu} and \eqref{LpTrELbd},

\begin{equation}\label{LstEq}
\sum_{n=1}^\infty \frac{\langle f, \varphi_n\rangle}{\rho+\mu_n}= \sum_{n=1}^\infty \frac{\langle f, \psi_n\rangle}{\rho+\lambda_n}, \ \ \Re \rho >0.
\end{equation}
Since we can continue analytically (in $\rho$) both series in \eqref{LstEq}, this equality actually holds for $\rho\in \mathbb{C}-\Big({\{ \mu_n}\}_{n\geq 1}\cup {\{ \lambda_n}\}_{n\geq 1} \Big)$. \\
We are now ready to show that $\mu_n=\lambda_n$ for all $n\geq 1$. We proceed by induction:\\

Without loss of generally, assume $\mu_1<\lambda_1$. Thus we can find a suitable disk containing $-\mu_1$ but not ${\{ -\mu_n\}}_{n\geq 2}\cup{\{ -\lambda_n\}}_{n\geq 1}. $ Then integrating \eqref{LstEq} over this disk, by the Cauchy's integral formula, we get
$$
2\pi i\langle f, \varphi_1\rangle=0: \ \ \text{this is a clear contradiction to } \ \eqref{Condf}. $$
This means that $ \mu_1=\lambda_1$ since the reverse inequality would also lead to a contradiction.

A similar argument yields $\mu_2=\lambda_2$.
Inductively, we deduce that
\begin{equation}\label{EqEigv}
\mu_n=\lambda_n \ \ \text{ for all}  \  n\geq 1.
\end{equation}
This also means that
\begin{equation}\label{AsymEigvAl}
c_1(n^{\alpha_1}+n^{\alpha_2})\leq \mu_n\leq c_2(n^{\alpha_1}+n^{\alpha_2})
\end{equation}
and

\begin{equation}\label{AsymEigvEt}
c_3(n^{\eta_1}+n^{\eta_2})\leq \mu_n\leq c_4(n^{\eta_1}+n^{\eta_2}), \ \ \text{where} \ \ c_i>0, i=1,2,3,4.
\end{equation}
Assume for example that $\alpha_2<\eta_2$, then combining \eqref{AsymEigvAl} and \eqref{AsymEigvEt} yields
$$
c_3^{'}n^{\eta_2}\leq \mu_n\leq c_2^{'}n^{\alpha_2}, \ \ \text{for all} \ n\geq 1: \ \text{a contradiction!}
$$
Therefore $\alpha_2=\eta_2$ since the reverse inequality would also lead to a contradiction.\\
Similarly, assuming $\alpha_1>\eta_1$ and combining \eqref{AsymEigvAl} and \eqref{AsymEigvEt}
gives
$$
c_1(n^{\alpha_1}+n^{\alpha_2})\leq c_4(n^{\eta_1}+n^{\alpha_2}), \ \ \text{for all} \ \ n\geq 1: \ \text{a contradiction!}
$$
Thus $\alpha_1=\eta_1$ and this concludes the proof.
\end{proof}

{

\section{Numerical Approximation}

In this section we discuss the numerical identification of parameters $\alpha_1, \alpha_2$, and $\beta$ from observations $g(t)$,  of $u(t,0)$ for $0 \leq t \leq T$, where $u(t,x)$ solves Equation \eqref{eq1}. We focus here on a least squares formulation of the parameter identification problem. Let $\theta=(\alpha_1,\alpha_2,\beta)$ denote the parameter vector and let
\[
\Theta := \{(\alpha_1,\alpha_2, \beta): 0<\alpha_1,\alpha_2<2, 0<\beta<1\}
\]
be the set of admissible parameters. Since the double fractional Laplacian $\mathtt{L}^{\alpha_1,\alpha_2}$ is symmetric in $\alpha_1$ and $\alpha_2$, we do not enforce the constraint $\alpha_1<\alpha_2$ in $\Theta$. Further, to indicate the dependence of the solution $u$ of Equation \eqref{eq1} on $\theta$, we write $u(t,x;\theta)$. The parameter identification problem can now be expressed as the least-squares optimization problem 
\begin{equation}\label{eq:parid_problem}
\min_{\theta \in \Theta} J(\theta) = \int_0^T |u(t,0;\theta)-g(t)|^2 \dt + \frac{\lambda}{2}\|\theta\|_2^2,
\end{equation}
where $\|\cdot\|_2$ denotes the Euclidean norm and $\lambda >0$ is a suitably chosen Tikonov regularization parameter. The regularization term improves the stability of the minimizer $\theta_\lambda$ in the presence of measurement noise at the cost of biasing the estimate. Heuristic methods are typically used to choose the parameter $\lambda$ that balances these two errors, the most well-known of which is the Morozov discrepancy principle \cite{Morozov1984}. In our computations we approximate $J(\theta)$ by a quadrature rule with nodes $0\leq t_1 < \hdots < t_m \leq T$ and weights $w_1,\hdots,w_m$, resulting in the approximation
\begin{equation*}
J(\theta) \approx \frac{1}{2}\sum_{i=1}^m w_i \left[u(t_i,0;\theta)-g(t_i)\right]^2 + \frac{\lambda}{2} \|\theta\|_2^2.
\end{equation*}
Since the solution of Equation \eqref{eq1} cannot be expressed in closed form, it must be approximated numerically. Moreover, the parameter vector $\theta$ is unknown \emph{a priori}, and hence care must be taken to ensure approximations whose accuracy is uniform over the parameter set $\Theta$. We base our computations on the weak form of $u$ given by Equation \eqref{PwExp}. Its evaluation requires that of the Mittag-Leffler function, as well as the eigenfunctions of the operator $\mathtt L^{\alpha_1,\alpha_2}$. We use the numerical method developed in \cite{Gorenflo2002} to evaluate the Mittag-Leffler function and approximate the eigenfunctions and eigenvalues of $\mathtt L^{\alpha_1,\alpha_2}$ by diagonalizing the discretized operator $\hat {\mathtt L}^{\alpha_1,\alpha_2}$, based on the finite difference method developed in \cite{Duo2018}, which is second order accurate irrespective of $\alpha_1$ and $\alpha_2$. The numerical approximation $\hat{u}$ of $u$ then takes the form
\[
\hat{u}(t_i,x;\theta) = \sum_{n=1}^N \hat{E}_{\beta}(-\hat{\mu}_n  t_i^\beta) \langle f, \hat{\varphi}_n\rangle \hat{\varphi}_n(x),
\]
where $\hat{E}_{\beta}$ denotes the approximation of $E_\beta$, and $\hat{\mu}_n$ and $\hat{\varphi}_n$, $n=1,...,N$, denote the approximate eigenvalues and eigenfunctions obtained via finite differencing. 

Problem \eqref{eq:parid_problem} and its approximation are box-constrained nonlinear least squares problems that can be solved using well-known least squares algorithms, such as the Levenberg-Marquardt method or the Gauss-Newton method (see \cite{GueNanOy}). In this section, we investigate the difficulties in solving Problem \eqref{eq:parid_problem} that arise from the lack of  parameter identifiability. To be sure, Theorem \ref{thm:parameters_determine_path_uniquely} establishes that the parameters $\alpha_1, \alpha_2$ and $\beta$ \emph{uniquely} determine the trajectory $u(t,0;\theta)$. In the following numerical experiment we show however that it is possible for the cost functional $J(\theta)$ to be quite insensitive to the parameters $\alpha_1$ and $\alpha_2$ near the unique minimizer. This leads to difficulties for gradient-based optimization algorithms, resulting in slow convergence or premature termination. 

In our numerical experiment to investigate parameter identifiability, we specify a reference parameter $\theta^* = (\alpha_1^*, \alpha_2^*,\beta^*)=(0.5,1.5,0.7)$ and compute the associated model output $u(t,x;\theta^*)$ for $x\in D=(-1,1)$ and $0<t<1$, which we use as a reference trajectory, i.e. $g(t)=u(t,0;\theta^*)$, $0<t<1$.  To eliminate any additional source of error, we consider only noise-free observations and set the regularization parameter $\lambda=0$. The initial condition $f(x)=(1-x^2)^\frac{7}{2}$, $x\in (0,1)$ is sufficiently smooth to ensure a uniform second order spatial approximation. Our numerical discretization $\hat{u}$ of $u$ has a spatial resolution of $\Delta x=10^{-2}$, a temporal resolution of $\Delta t=10^{-2}$, and full spectral resolution, i.e. all terms in the spectral expansion are retained. We use the composite Simpson rule to evaluate the time-integral of the squared residual.   

\begin{figure}[tbhp!]
\centering
\includegraphics[scale=0.75]{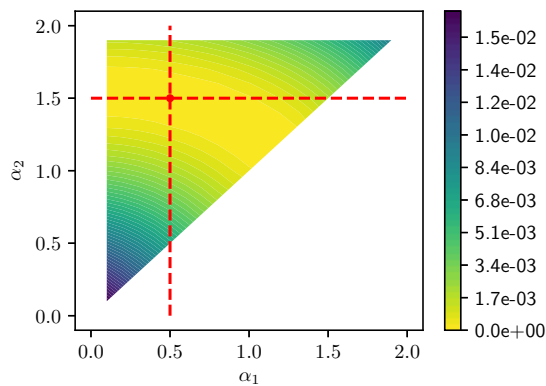}
\caption{Contour plot of $J(\alpha_1,\alpha_2,0.7)$.}
\label{fig:ex01_contour_alphas}
\end{figure}

We first fix $\beta=0.7$ and examine the bivariate cost function $J(\alpha_1,\alpha_2,0.7)$, shown in Figure \ref{fig:ex01_contour_alphas}. The contour plot shows clearly that there is a strip containing $(\alpha_1^*,\alpha_2^*)$ in which the cost $J$ (i) attains comparatively small values and (ii) show very little variation. This behavior is also evident in the cross-sections plotted in Figure \ref{fig:ex01_cross_sections}. In particular, the cross-section of $\alpha_1$ when $\alpha_2=\alpha_2^*$ shows a flat cost functional in a region of $\alpha_1^*$.

\begin{figure}[tbhp!]
\centering
\subfloat[Cross section $J\left(\frac{1}{2},\alpha_2,0.7\right)$]{\includegraphics[scale=0.75]{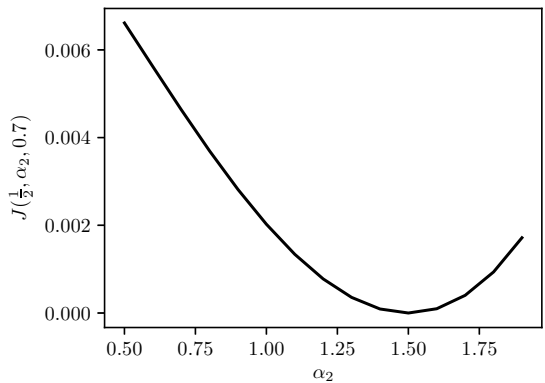}} \ \
\subfloat[Cross section $J\left(\alpha_1, \frac{3}{2}, 0.7\right)$]{\includegraphics[scale=0.75]{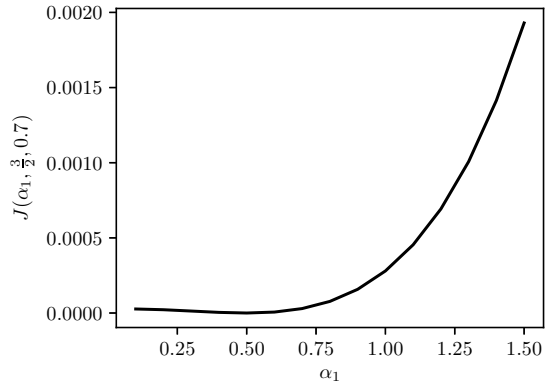}} 
\caption{Cross sections from the contour plot in Figure \ref{fig:ex01_contour_alphas}}
\label{fig:ex01_cross_sections}
\end{figure}

To compare the trajectories within the flat $(\alpha_1, \alpha_2)$-strip containing the minimizer $(\alpha_1^*,\alpha_2^*)$, we plotted regions of $(\alpha_1,\alpha_2)$-points in which the cost functional $J$ has values below various thresholds, as well as the associated trajectory deviations $u(t,0;\theta)-g(t)$. While there is only one trajectory with a deviation of $0$ (the optimal one), the trajectories associated with the parameters that lie within strips of various widths surrounding $(\alpha_1^*,\alpha_2^*)$ yield uniformly low cost function values.   

\begin{figure}[th!]
\centering
\subfloat[]{\includegraphics[scale=0.75]{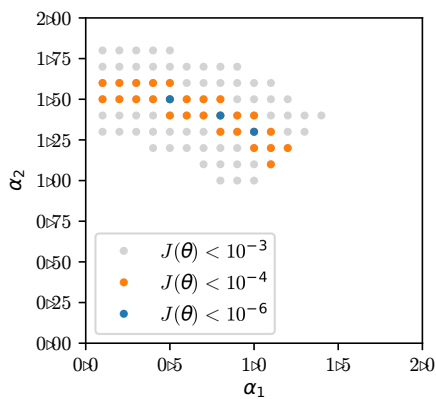}} \ \
\subfloat[]{\includegraphics[scale=0.75]{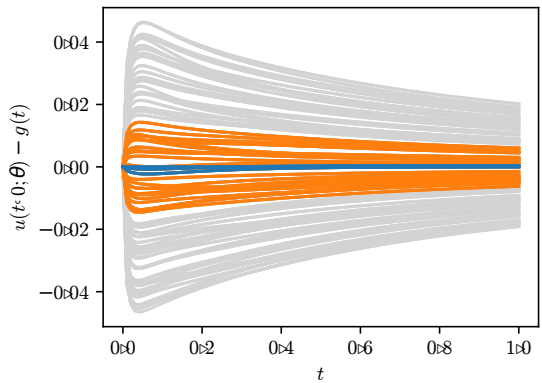}} \ \
\caption{Sets of $(\alpha_1,\alpha_2)$-pairs and associated trajectories for which the cost functional lies below thresholds $10^{-3}, 10^{-4}$, and $10^{-6}$.}
\label{fig:ex01_thresholds}
\end{figure}

Next we investigate the sensitivity of $J$ with respect to $\beta$. Since the dependence of $J$ on $\beta$ is mediated by $\alpha_1$ and $\alpha_2$, we consider the mappings $\beta\mapsto J(\alpha_1,\alpha_2,\beta)$ for a set of $(\alpha_1,\alpha_2)$-pairs that satisfy $J(\alpha_1,\alpha_2, \beta^*)<10^{-4}$, i.e. the orange points in Figure \ref{fig:ex01_thresholds} (a). Our results in Figure \ref{fig:ex01_cost_betas} show that, while there is some variation in the cost functional for different values of $(\alpha_1,\alpha_2)$, the cost functional (i) does not exhibit flat regions, and (ii) attains its minimimum consistently at $\beta=\beta^*=0.7$. 

\begin{figure}[th!]
\centering 
\includegraphics[scale=0.75]{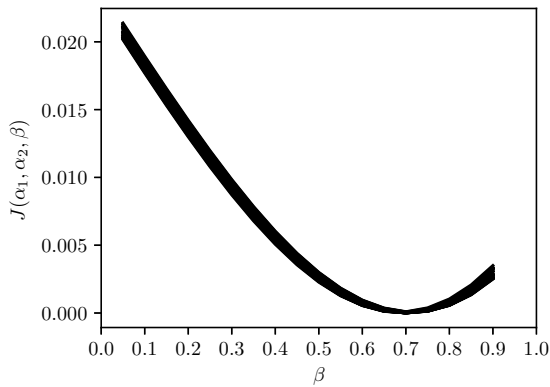}
\caption{The cost functional $\beta \mapsto J(\alpha_1,\alpha_2,\beta)$ for a selected set of $(\alpha_1,\alpha_2)$-pairs.}
\label{fig:ex01_cost_betas}
\end{figure} 

It would therefore seem that the determination of $\beta$ from trajectory data $g(t)$ is less ill-posed than that of $\alpha_1$ and $\alpha_2$ - our numerical experience supports this conjecture. 

We have shown in this section how the insensitivity of the least-squares cost functional $J$ to $\alpha_1$ and $\alpha_2$ can lead to problems in the practical identification of these parameters from an observed trajectory $g(t)$. Possible strategies for improving the identifiability of the double fractional Laplacian include: estimating $\beta$ before $\alpha_1$ and $\alpha_2$, and/or weighting the initial, transient time interval more heavily.  

{
\section{Conclusion}
\noindent We have studied a nonlocal inverse problem for the space-time fractional diffusion equation 

$\partial_t^\beta u(t, x) = -(-\Delta)^{\alpha_1/2}u(t,x) - (-\Delta)^{\alpha_2/2}u(t,x).$ We showed that the equation has a weak solution and that given an additional data $g(t)= u(t,0)$, we can uniquely determine the time and space fractional exponents $\beta, \alpha_1$ and $\alpha_2$. Finally, we provided a numerical approximation to the solution. The ill-posedness of the problem made this numerical approximation very challenging as expected. We would like to point out that we have also investigated the problem above for a spectrally defined mixed fractional Laplacian in \cite{GueNanOy}.  We also think that our methods can be used to solve a broader range of inverse problems, including the fractional diffusion equation with a source term: 

 $\partial_t^\beta u(t, x) = -(-\Delta)^{\alpha_1/2}u(t,x) - (-\Delta)^{\alpha_2/2}u(t,x)+ F(t,x)$ and distributed-order fractional diffusions.  These will be our future project.


}
\newpage

\Addresses


\begin{thebibliography}{10}

\bibitem{BG}
R. M.~ Blumenthal, R. K.~ Getoor.
\newblock  Asymptotic distribution of the eigenvalues for a class of Markov operators.
\newblock {\em  Pacific J. Math.}, 9:399--408, 1959.


\bibitem{Cap}
M. ~ Caputo.
\newblock Linear models of diffuson whose Q is almost frequency independent, part II.
\newblock {\em Geophys. J. R. Astron. Soc.}, 13:529--539, 1967.

\bibitem{CNYY}
J. ~ Chen, J. ~ Nakagawa, M. ~Yamamoto and T.~ Yamazaki.
\newblock  Uniqueness in an inverse problem for a one-dimensional fractional diffusion equation.
\newblock {\em Inv. Prob.}, 25:115--131, 2009.

\bibitem{ChKmSg}
Z-Q. Chen, P. Kim and R. Song.
\textit{Dirichlet heat kernel estimates for $\Delta^{\alpha/2}+\Delta^{\beta/2}$. } {\em Ill. J. Math,} vol 54 (2010) 1357-1392

\bibitem{ChSg}
Z-Q. Chen and R. Song.
\textit{Two-sided eigenvalue estimates for subordinate processes in domains.}
{\em J. Func. Ana.} 226 (2005) 90-113.



\bibitem{CMN}
Z-Q. ~ Chen, M. M. ~ Meerschaert and E.~ Nane.
\newblock Space-time fractional diffusion on bounded domains.
\newblock {\em J. Math. Ana.  Appl.}, 393:479--488, 2012.

\bibitem{ChKmSgVk}
S. Cho, P. Kim, R. Song and Z. Vondra$\check{c}$ek.
\textit{Factorization and estimates of Dirichlet heat kernels for non-local operators with critical killings.} arXiv: 1809.01782v1 (2018).

\bibitem{Duo2018}
S. Duo, H.-W. van Wyk, and Y. Zhang
\newblock A novel and accurate finite difference method for the fractional Laplacian and the fractional Poisson problem
\newblock{\em Journal of Computational Physics}, 355:233--252, 2018


\bibitem{10}
S. D. ~ Eidelman, S. D.~ Ivasyshen, A. N. ~ Kochubei.
\newblock Analytic Methods in the Theory of Differential and Pseudo-Differential Equations of Parabolic Type.
\newblock {\em  Birkh\"{a}user, Basel}, 2004.

\bibitem{FukShDa}
M. Fukushima, Y. Oshima and M. Takeda.
\textit{Dirichlet forms and symmetric Markov processes.} {\em de Gruyter,} Berlin, 1994.

{
\bibitem{GueNanOy}
N. Guerngar, E. Nane, R. Tinatztepe, S. Ulusoy, and H.-W.van Wyk.
\textit{Simultaneous inversion for the fractional exponents in the
	space-time fractional diffusion equation $\partial_t^\beta u= -\big(-\Delta\big)^{\alpha/2}u-\big(-\Delta\big)^{\gamma/2}u$.} { Submitted for publication.}
}


\bibitem{Gorenflo2002}
R. Gorenflo, J. Loutchko, and Y. Luchko. 
\newblock Computation of the Mittag-leffler Function and Its Derivatives
\newblock {\em Fractional Calculus \& Applied Analysis (FCAA)}, 5, 2002


\bibitem{JR}
B. ~ Jin,  and W.~ Rundell.
\newblock An inverse problem for a one-dimensional time-fractional diffusion problem.
\newblock {\em Inv. Prob.}, 28:075010, 2012.


\bibitem{Kil}
A. A. ~Kilbas, H. M. ~Srivastava and J. J.~ Trujillo.
\newblock Theory and Applications of Fractional Differential Equations.
\newblock {\em Elsevier, Amsterdam}, 2006.


\bibitem{K}
M. ~ Kwasnicki.
\newblock Eigenvalues of the fractional Laplace operator
in the interval.
\newblock {\em J. Func. Ana.}, 262:2379--2402, 2012.

\bibitem{Mis2}
J.J ~ Liu and M. ~ Yamamato.
\newblock A backward problem for the time-fractional diffusion equation.
\newblock {\em Appl. Ana.}, 89:1769-1788, 2010.


\bibitem{Mis1}
F. ~ Mainardi, Y. ~ Luchko and G. ~ Pagnini.
\newblock The fundamental solution of the space-time fractional diffusion equation.
\newblock {\em Fract. Cal.  Appl. Ana.}, 4:153-192, 2001.



\bibitem{MBSB}
M. M. ~ Meerschaert, D. A. ~Benson, H.-P. ~ Scheffler and B.~ Baeumer.
\newblock Stochastic solution of space-time fractional diffusion equations.
\newblock {\em Phys. Rev. E}, 65, 2002.


\bibitem{11}
I. ~Podlubny.
\newblock Fractional Differential Equations.
\newblock {\em Academic Press, San Diego}, 1999.


\bibitem{Mis5}
K. ~ Sakamoto and M. ~ Yamamato
\newblock Inverse source problem with a final overdetermination for a fractional diffusion equation.
\newblock {\em Math. Cont.  Rel. Fiel.}, 4:509-518, 2011.

\bibitem{Mis6}
K. ~ Sakamoto and M. ~ Yamamato
\newblock Initial value/boundary value problems for fractional diffusion-wave equations and applications to some inverse problems
\newblock {\em J.  Math. Ana.  Appl.}, 382:426-447, 2011.


\bibitem{Morozov1984}
Morozov, Vladimir Alekseevich
\newblock Methods for solving incorrectly posed problems
\newblock {\em Springer New York, 1984}


\bibitem{RFD}
S.G. ~ Samko, A.A. ~ Kilbas and O.I. ~ Marichev.
\newblock Fractional Integrals and Derivatives: Theory and Applications.
\newblock {\em Taylor$\&$Francis}, 1983.

\bibitem{Sato}
K-I. Sato.
\textit{L\'evy Processes and Infinitely Divisible Distributions. }{\em Cam. Uni. Press 1999, ISBN 0 \ 521\ 553024}

\bibitem{TrPeSy}
S. Tatar, R. Tinaztepe and S. Ulusoy.
\textit{simultaneous inversion for the exponents of the fractional time and space derivatives in the space-time fractional diffusion equation.} {\em Appl. Ana.} (2014).

\bibitem{TarSoy}
S. Tatar and S. Ulusoy.
\textit{A uniqueness result for an inverse problem in a space-time fractional diffusion equation.} {\em Elec. J. Diff. Eq,} Vol. 2013 (2013), No. 258, P. 1-9

\bibitem{Mis3}
X ~ Xu, J. ~ Cheng and M. ~ Yamamato
\newblock Carleman esimate for a fractional diffusion equation with half order and application.
\newblock {\em Appl. Ana.}, 90:1355-1371, 2011.


\bibitem{Mis4}
M. ~ Yamamato and Y. ~ Zhang
\newblock Conditional stability in determining a zeroth-order coefficient in a half-order fractional diffusion equation by a Carleman estimate.
\newblock {\em Inv.Prob.}, 28:105010, 2012.


\bibitem{ZX}
Y. ~ Zhang,  and X.~ Xu.
\newblock Inverse source problem for a fractional diffusion equation.
\newblock {\em Inv. Prob.}, 27:035010, 2011.


\end{thebibliography}
\end{document}